\newcommand{\Z}{{\mathbb Z}}
\newcommand{\Q}{{\mathbb Q}}
\newcommand{\F}{{\mathbb F}}
\newcommand{\BP}{{\mathbb P}}
\newcommand{\To}{\longrightarrow}
\newcommand{\Pic}{\operatorname{Pic}}
\newcommand{\rank}{\operatorname{rank}}
\newcommand{\dyn}{{\text{\rm dyn}}}
\newtheorem{Theorem}{Theorem}%[section]
\newtheorem{Lemma}[Theorem]{Lemma}
\begin{document}

\title[Rational 6-cycles]{Rational 6-cycles under iteration \\ of quadratic polynomials}

\author{Michael Stoll}
\address{Department of Mathematics \\
         University of Bayreuth \\
	 95440 Bayreuth, Germany}
\email{Michael.Stoll@uni-bayreuth.de}
\date{October 8, 2008}

\subjclass{11G30, 11G40, 14G05, 14G25, 11D41}

\begin{abstract}
  We present a proof, which is conditional on the Birch and Swinner\-ton-Dyer
  Conjecture for a specific abelian variety, that there do not exist
  rational numbers $x$ and~$c$ such that $x$ has exact period $N = 6$ under
  the iteration $x \mapsto x^2 + c$. This extends earlier results
  by Morton for $N = 4$ and by Flynn, Poonen and Schaefer for $N = 5$.
\end{abstract}

\maketitle

%=========================================================================

\section{Introduction}

In this note, we present a conditional proof that
there do not exist rational numbers $x$ and~$c$ such that the 
sequence defined by $x_0 = x$, $x_{n+1} = x_n^2 + c$ (for $n \ge 0$)
has exact period~6. The assumptions we have to make are that
the $L$-series of a certain genus~4 curve $X^{\dyn}_0(6)$ extends to
an entire function and satisfies the usual kind of functional equation,
and that the Jacobian of~$X^{\dyn}_0(6)$ satisfies the weak form of the
Birch and Swinnerton-Dyer conjecture.

This extends a series of investigations on rational cycles under
quadratic iteration. It is easy to see that fixed points and
2-cycles are each parametrized by a rational curve; the same is true
for 3-cycles. Morton~\cite{Morton} has shown that 4-cycles are
parametrized by the modular curve~$X_1(16)$; he used this to show
that there do not exist rational 4-cycles. Flynn, Poonen, and 
Schaefer~\cite{FPS} proved that there are no rational 5-cycles.
The present paper gives a conditional proof that there are no rational 6-cycles.
It is conjectured (see~\cite{Silverman}) that there is a universal
bound on the number of rational preperiodic points under quadratic iteration.
In view of the results obtained so far, it seems reasonable to expect
that there are no rational $N$-cycles when $N > 3$. Poonen~\cite{Poonen}
shows that this would imply that there can be at most 9 rational preperiodic
points.

Here is an overview of the proof. Pairs $(x, c)$ such that $x$ is
periodic of exact order~6 under the map $x \mapsto x^2 + c$ give rise
to points on the affine curve $Y^{\dyn}_1(6)$ with equation
\[ \Phi^*_6(x,c) := \frac{\bigl(f^{(6)}(x,c) - x\bigr)\bigl(x^2-x+c\bigr)}%
                         {\bigl(f^{(3)}(x,c)-x\bigr)\bigl(f^{(2)}(x,c)-x\bigr)}
                  = 0 \,,
\]
where $f^{(0)}(x,c) = x$, $f^{(n+1)}(x,c) = f^{(n)}(x^2+c,c)$ denote the
iterates of $x \mapsto x^2 + c$. (For some of the points, the orbit of~$x$
actually has exact order a proper divisor of~$6$; see~\cite{Silverman}
for details.) We denote by $X^{\dyn}_1(6)$ the smooth projective model
of~$Y^{\dyn}_1(6)$. This curve has an automorphism $\sigma$ of order~6
that is induced by the map $(x,c) \mapsto (x^2+c,c)$ on~$Y^{\dyn}_1(6)$.
We denote the quotient $X^{\dyn}_1(6)/\langle \sigma \rangle$ by
$X^{\dyn}_0(6)$. This is a curve of genus~4. We determine the set
of rational points on this curve, from which we can find the set of
rational points on~$X^{\dyn}_1(6)$. It turns out that all of these points
are ``cusps'', i.e., they are in the complement of~$Y^{\dyn}_1(6)$ and
hence do not correspond to pairs~$(x,c)$ as above.

We first find a nice model of $X^{\dyn}_0(6)$ (see Section~\ref{Smodel} below).
There are ten rational points on this curve that are easy to find.
We show that they generate a torsion-free subgroup $G$ of rank~3 in the
Mordell-Weil group~$J(\Q)$, where $J$ is the Jacobian of~$X^{\dyn}_0(6)$.
We further show that there are no other rational points that map into
the saturation of this subgroup in~$J(\Q)$. This is done in 
Section~\ref{Spoints} below. It remains to show that $G$ is a 
finite-index subgroup of~$J(\Q)$. It is in this part of the proof that
we have to make assumptions on the $L$-series, since we want to use the Birch
and Swinnerton-Dyer conjecture. We compute enough coefficients of
the $L$-series to show that its third derivative at $s = 1$ does not
vanish, which, according to the BSD conjecture, implies that the rank
of~$J(\Q)$ is at most~3. See Section~\ref{Srank} below. (Note that a
2-descent on~$J$, which is the usual way to obtain an upper bound on the
Mordell-Weil rank for low-genus curves, requires knowledge
of the class and unit groups of a number field of degree~119. The necessary
computations are utterly infeasible with current technology, even when
assuming GRH.)

We have used the {\sf MAGMA} \cite{Magma} computer algebra system in order
to perform the necessary computations. A script that can be loaded into
{\sf MAGMA} and that performs the relevant computations is available
at~\cite{Script}.

\medskip

This curve $X^{\dyn}_0(6)$ appears to be the first curve of genus~$\ge 4$ 
that is not very special in some way, e.g., hyperelliptic, or covering
a curve of smaller genus, or a modular or Shimura curve, for which the set
of rational points could be explicitly determined (assuming reasonable
standard conjectures). The methods used here should be applicable
in other cases as well, provided
\begin{itemize}
  \item we can find a finite-index subgroup of the Mordell-Weil group,
  \item its rank is less than the genus, and
  \item the conductor is reasonably small.
\end{itemize}

\subsection*{Acknowledgments}
I would like to thank the American Institute of Mathematics in Palo Alto,
California, for hosting a workshop on ``The uniform boundedness conjecture
in arithmetic dynamics'' in January~2008, and the organizers and participants
of that workshop for creating a very productive research environment.
Most of the computations described in this note were carried out during this
workshop.
I also would like to thank Fritz Grunewald for his suggestion to find the 
endomorphism ring of the Jacobian of~$X^{\dyn}_0(6)$.

%=========================================================================

\section{The Model} \label{Smodel}

In order to obtain a smooth projective model of $X^{\dyn}_0(6)$,
we first find an equation for $Y^{\dyn}_0(6)$ (the image of 
$Y^{\dyn}_1(6)$ in $X^{\dyn}_0(6)$) as an affine plane curve.
For a point $(x,c) \in Y^{\dyn}_1(6)$, we denote the ``trace'' of its orbit by
\[ t = x + (x^2 + c) + f^{(2)}(x,c) + \dots + f^{(5)}(x,c) \,. \]
The resultant with respect to~$x$
of $\Phi^*_6(x,c)$ and 
\hbox{$t - \bigl(f^{(0)}(x,c) + \dots + f^{(5)}(x,c)\bigr)$}
is a sixth power; one of its sixth roots is
\begin{align*}
  \Psi_6(t,c) &= 256(t^3 + t^2 - t - 1) c^3 
                 + 16(9 t^5 + 7 t^4 + 10 t^3 + 30 t^2 - 19 t - 37) c^2 \\
   &\quad{} + 8(3 t^7 + t^6 + 2 t^5 + 2 t^4 - 17 t^3 + 69 t^2 + 52 t - 48) c \\
   &\quad{} + t^9 - t^8 + 2 t^7 + 14 t^6 + 49 t^5 + 175 t^4 + 140 t^3 + 196 t^2 
           + 448 t \,.
\end{align*}
We first resolve the singularities at infinity. Successively getting
rid of multiple factors at the edges of the Newton polygon, we arrive at
the equation
\begin{align*}
  F(u,v) &= (u^4 - u^3) v^3 + (-u^5 + 9 u^4 + 6 u^3 - 17 u^2 + 3 u) v^2 \\
   &\qquad{} + (4 u^4 + 74 u^3 - 52 u^2 - 54 u + 24) v \\
   &\qquad{} + 4 u^4 + 24 u^3 + 117 u^2 - 261 u + 72 \\
         &= 0 \,.
\end{align*}
Here
\[ u = \frac{2}{t+1} \quad\text{and}\quad
   v = 4(c-1) + (t-1)^2 + \frac{2}{t+1} \,,
\]
or
\[ t = \frac{2}{u} - 1 \quad\text{and}\quad
   c = \frac{v}{4} - \frac{1}{u^2} + \frac{2}{u} - \frac{u}{4} \,.
\]
The curve defined by this equation has three singularities at points
$(\alpha, \beta)$, where
\[ 3 \beta^3 + 32 \beta^2 + 69 \beta + 72 = 0 \quad\text{and}\quad
   18 \alpha = 6 \beta^2 + 55 \beta + 69 \,.
\]
From the Newton polygon of~$F$, we see that the regular differentials
on the smooth projective model of this curve are contained in the
space spanned by 
$\{\omega_0, u \,\omega_0, u v \,\omega_0, u^2 \,\omega_0, u^2 v \,\omega_0,
   u^3 \,\omega_0, u^3 v \,\omega_0\}$,
where 
\[ \omega_0 = \frac{du}{\frac{\partial}{\partial v} F(u,v)}
            = -\frac{dv}{\frac{\partial}{\partial u} F(u,v)} \,.
\]
(See~\cite{Khovanskii}, in particular the example on page~42.)
These differentials are regular everywhere except perhaps at the
singularities described above. In order to get something regular there,
the polynomial that $\omega_0$ is multiplied by has to vanish at the
singularities. We obtain the following basis of~$\Omega^1_{X^{\dyn}_0(6)}$.
\begin{align*}
  \omega_1 &= (u^3 v + 2 u^3 - 3 u^2 v - u^2 + 3 u v + 6) \,\omega_0 \\
  \omega_2 &= (u^3 v + 2 u^3 - u^2 v + u^2 + 3 u - 6) \,\omega_0 \\
  \omega_3 &= (u^3 v + 2 u^3 - 4 u^2 v - 3 u^2 + 3 u v - 3 u) \,\omega_0 \\
  \omega_4 &= (3 u^3 v + 4 u^3 - 3 u^2 v + 6 u^2 - 6 u) \,\omega_0
\end{align*}
The canonical model of a curve of genus~4 is the intersection of a
quadric and a cubic in~$\BP^3$. We see that $u$ is a rational function
of degree~3, which implies that the quadric splits, i.e., it is
isomorphic to $\BP^1 \times \BP^1$ over~$\Q$. So there is a model 
of~$X^{\dyn}_0(6)$ that is a smooth curve of bidegree~$(3,3)$ 
in~$\BP^1 \times \BP^1$. To find a suitable second coordinate (besides~$u$),
we take the quotient of two differentials vanishing on $u = 0$. This
means that the differentials may not contain $\omega_0$ or $u v\,\omega_0$
with a nonzero coefficient. A possible choice is
\[ w = \frac{-\omega_1 - \omega_2 + \omega_3 + 2 \omega_4}{\omega_4}
     = \frac{-u^2 v + 3 u v + 18}{u^2 v + 2 u^2 + 3 u + 6} \,. 
\]
In terms of $u$ and~$w$, we now have $X^{\dyn}_0(6)$ as a smooth curve
in~$\BP^1 \times \BP^1$, with (affine) equation
\[ G(u,w) 
    = w^2 (w+1) u^3 - (5w^2 + w + 1) u^2 - w(w^2 - 2 w - 7) u
       + (w + 1)(w - 3) = 0 \,.
\]
We will denote this curve by~$C$.
Note that
\[ c = \frac{(-u^3 - 2 u^2 + 5 u - 10) u w 
               - u^4 + 3 u^3 + 8 u^2 - 10 u + 12}%
            {4 u^2 (u w + u - 3)}
\]
on this model. 

Our model has good reduction except at~$2$ and at~$p = 8029187$. 
Mod~$p$, we have a node at $(u,w) = (2937959, 7887180)$ with tangent
directions defined over~$\F_p$. This point is regular on
the arithmetic surface given by our equation.

Mod~2, there is a node at $(1,0)$
with tangent directions defined over~$\F_4$ and a tacnode at $(0,1)$
with local branches again defined over~$\F_4$. Both singularities are
non-regular points of the arithmetic surface.
Resolving these points gives us the minimal proper regular model over~$\Z_2$.
The node resolves into a chain of three $\BP^1$'s whose ends intersect
the original component. Blowing up the tacnode gives a double line, all
of whose points are non-regular. Blowing up this line, we obtain a smooth
curve of genus~1, meeting the original components in two (regular) points.
Therefore, the special fiber of the minimal proper regular model 
consists of five components $A,B,C,C',D$, each of
multiplicity one. $A$ and $B$
are both elliptic curves with trace of Frobenius~$-1$, the other
components are $\BP^1$'s. $A$, $B$, and~$D$ are defined over~$\F_2$,
$C$ and~$C'$ are defined over~$\F_4$ and conjugate. The intersection
matrix is as follows.
\[ \begin{array}{r|ccccc|}
        & A & B & C & C' & D \\\hline
     A  & -4 & 2 & 1 & 1 & 0 \\
     B  & 2 & -2 & 0 & 0 & 0 \\
     C  & 1 & 0 & -2 & 0 & 1 \\
     C' & 1 & 0 & 0 & -2 & 1 \\
     D  & 0 & 0 & 1 & 1 & -2 \\\hline
   \end{array}
\]
(For some worked examples of how to compute minimal regular models,
see for example~\cite{FLSSSW} or~\cite{PSS}.)

The two (separate) intersection points of $A$ and~$B$ are swapped by
the action of~Frobenius. We see that the reduction of the Jacobian
has a 2-dimensional abelian and a 2-dimensional toric component
(since the dual graph of the special fiber has two independent loops,
compare~\cite[\S~9.2]{BLR});
Frobenius reverses the orientation of both loops.
We can summarize our findings in the following lemma.

\begin{Lemma}
  The Jacobian of $C = X^{\dyn}_0(6)$ has conductor $2^2\,p$, where
  $p = 8\,029\,187$ is the big prime from above. Its Euler factor at~$2$
  is $(1 + T + 2T)^2 (1 + T)^2$.
\end{Lemma}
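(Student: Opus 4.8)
The plan is to read off both statements from the descriptions of the minimal proper regular models of $C$ over $\Z_2$ and over $\Z_p$ obtained above, using the standard structure theory of N\'eron models and of semistable reduction of abelian varieties (see, e.g., \cite{BLR}). Write $J$ for the Jacobian of $C$.

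\emph{The conductor.} The displayed model $G(u,w)=0$ has good reduction at every prime other than $2$ and $p$, so $J$ has good reduction there; by the N\'eron--Ogg--Shafarevich criterion its conductor exponent vanishes at each such prime. At $p$ the special fiber of the regular model is a geometrically integral curve with a single node that is moreover a regular point of the arithmetic surface; hence $C$, and so $J$, has semistable reduction at $p$ with toric dimension $1$ (the dual graph of an integral one-nodal curve has first Betti number~$1$). At $2$ the special fiber of the minimal proper regular model constructed above is a reduced normal-crossings divisor---the five components $A,B,C,C',D$ occur with multiplicity one---so $J$ also has semistable reduction at $2$, now with toric dimension equal to the first Betti number of the dual graph~$\Gamma$. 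From the intersection matrix, $\Gamma$ has the vertices $A,B,C,C',D$ and six edges---two joining $A$ and $B$, and one each for $A\cap C$, $A\cap C'$, $C\cap D$ and $C'\cap D$---so $b_1(\Gamma)=6-5+1=2$. Since the conductor exponent of a semistable abelian variety equals its toric dimension and the wild part vanishes, the conductor of $J$ is $2^2\cdot p^1=2^2p$.

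\emph{The Euler factor at $2$.} The identity component $J^0_{\F_2}$ of the special fiber of the N\'eron model of $J$ over $\F_2$ is an extension of an abelian variety of dimension~$2$ by a torus of dimension~$2$ (compare \cite[\S~9.2]{BLR}), and its Euler factor at $2$ is the product of the Euler factors of these two pieces. The abelian part is the product of the Jacobians of the positive-genus components of the special fiber, i.e., of the two elliptic curves $A$ and $B$; each of these has trace of Frobenius $-1$ over $\F_2$, hence Euler factor $1-(-1)T+2T^2=1+T+2T^2$, so the abelian part contributes $(1+T+2T^2)^2$. The toric part has character group $H_1(\Gamma,\Z)$ with its natural Frobenius action; each of the two loops of $\Gamma$---the bigon on $\{A,B\}$ and the quadrilateral through $A,C,D,C'$---is mapped to itself by Frobenius, but, because Frobenius interchanges the two points of $A\cap B$ and interchanges the conjugate components $C$ and $C'$, it \emph{reverses} the orientation of each loop; thus Frobenius acts as $-\mathrm{id}$ on $H_1(\Gamma,\Z)\otimes\Q\cong\Q^2$, and the toric part contributes $(1+T)^2$. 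Multiplying, the Euler factor of $J$ at $2$ is $(1+T+2T^2)^2(1+T)^2$.

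The one delicate point is this last computation of the Frobenius action on the toric part: one must check that a loop whose orientation Frobenius reverses contributes a factor $1+T$ rather than $1-T$, exactly as a \emph{non-split} rather than \emph{split} multiplicative prime does for an elliptic curve. That is precisely what the recorded facts---that the two points of $A\cap B$ are Frobenius-conjugate and that $C$ and $C'$ are conjugate over $\F_4$---are there to establish. Everything else is a routine application of the structure theory of N\'eron models to the explicit regular models already exhibited.
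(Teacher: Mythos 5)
Your proposal is correct and takes essentially the same route as the paper, which offers no separate proof but simply summarizes the facts already established by its explicit regular models at $2$ and at $p$ (good reduction elsewhere, the semistable fibers, the two elliptic components with $a_2=-1$, the two loops of the dual graph with Frobenius reversing their orientations); you merely make explicit the standard N\'eron-model and semistability statements (conductor exponent equals toric rank, Euler factor splits into abelian and toric contributions, orientation-reversed loops give factors $1+T$) that the paper leaves implicit, and all of these are applied correctly. Incidentally, your computation confirms that the paper's ``$(1+T+2T)^2$'' is a typo for $(1+T+2T^2)^2$.
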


We end this section by showing that $X_0^{\dyn}(6)$ does not have any
special geometrical properties that might help us.

\begin{Lemma} \label{L2} \strut
  We have $\operatorname{End}_{\bar{\Q}} J = \Z$. In particular:  
  \begin{enumerate}
    \item The automorphism group of $X_0^{\dyn}(6)$ is trivial
          (even over~$\bar{\Q}$).
    \item The Jacobian of $X_0^{\dyn}(6)$ is absolutely simple.
    \item The curve $X_0^{\dyn}(6)$ does not cover any other curve of positive
          genus, except itself (not even over~$\bar{\Q}$).
  \end{enumerate}
\end{Lemma}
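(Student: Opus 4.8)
The plan is to first show $\operatorname{End}_{\bar\Q} J = \Z$ by reducing modulo primes of good reduction and examining the characteristic polynomials of Frobenius, and then to read off the three numbered assertions.

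Fix a prime $p \notin \{2,\,8\,029\,187\}$, so that $C$ has good reduction at $p$, and write $\bar{J}$ for the reduction of $J$. Counting points on $C$ over $\F_p, \F_{p^2}, \F_{p^3}, \F_{p^4}$ --- a computation {\sf MAGMA} performs instantly --- produces the characteristic polynomial $f_p \in \Z[T]$ of the Frobenius endomorphism of $\bar{J}$, a $p$-Weil polynomial of degree~$8$. I would choose $p$ so that $f_p$ is irreducible over~$\Q$, so that $\bar{J}$ is ordinary (equivalently, $p$ does not divide the coefficient of $T^4$ in $f_p$), and so that no ratio of two distinct roots of $f_p$ is a root of unity; the last two conditions together say that $\bar J$ is a geometrically simple ordinary abelian variety, so that $\operatorname{End}_{\bar\F_p}(\bar J) \otimes \Q$ is the CM field $K_p := \Q[T]/(f_p)$ of degree~$8$. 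Since the reduction map on endomorphisms is injective, we obtain an embedding of $\Q$-algebras $E := \operatorname{End}_{\bar\Q}(J) \otimes \Q \hookrightarrow K_p$. Being a subring of a field, $E$ is itself a number field, totally real or CM, of degree dividing~$8$.

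It remains to force $E = \Q$. For this I would pick two primes $p, q$ as above with the additional property that the Galois group of $f_p$ (resp.\ of $f_q$) is the full hyperoctahedral group $C_2 \wr S_4$ of order~$384$ --- the generic Galois group of the Frobenius polynomial of an abelian fourfold --- and that the maximal totally real subfields $K_p^+$ and $K_q^+$ (each of degree~$4$) are non-isomorphic. A short group-theoretic argument shows that a degree-$8$ field whose Galois closure has group $C_2 \wr S_4$ has exactly three subfields: $\Q$, its maximal real subfield, and itself; hence the only number field that embeds into both $K_p$ and $K_q$ is~$\Q$ (note that $K_p^+ \not\cong K_q^+$ in particular rules out $K_p \cong K_q$). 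Therefore $E = \Q$, and since $\operatorname{End}_{\bar\Q}(J)$ is a subring of~$\Q$ that is finitely generated over~$\Z$ and contains~$1$, it equals~$\Z$. Locating suitable $p$ and~$q$ and verifying these Galois groups and the non-isomorphism of the real subfields is the one genuinely computational --- and the only truly load-bearing --- step; I expect the two smallest primes of good reduction to already work. The subtlety to watch is the ordinariness and geometric simplicity of the reductions: without them $\operatorname{End}_{\bar\F_p}(\bar J)\otimes\Q$ need not be a field, and the bound on~$E$ collapses.

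The three consequences then follow formally. For~(2): a field has no nontrivial idempotents, so $E = \Q$ forces $J$ to be absolutely simple. For~(3): if $\phi\colon C \to Y$ is a non-constant morphism onto a curve of positive genus that is not an isomorphism, then Riemann--Hurwitz gives $1 \le g(Y) \le 3$ (if $g(Y)=4$ the map would be unramified of degree~$1$, hence an isomorphism onto $C$ itself), and $\phi^*\colon \Pic^0 Y \to J$ has finite kernel, so its image is a nonzero proper abelian subvariety of~$J_{\bar\Q}$, contradicting~(2). For~(1): the curve $C$ is non-hyperelliptic --- its bidegree-$(3,3)$ model in $\BP^1 \times \BP^1$ is its canonical embedding, since $\mathcal{O}(1,1)$ restricts to the canonical class --- so an automorphism acting trivially on $H^0(\Omega^1_C)$ fixes the canonical image pointwise and is trivial, whence the natural homomorphism $\operatorname{Aut}_{\bar\Q}(C) \to \operatorname{Aut}_{\bar\Q}(J) \subseteq (\operatorname{End}_{\bar\Q} J)^\times = \{\pm 1\}$ is injective; thus $|\operatorname{Aut}_{\bar\Q}(C)| \le 2$, and a nontrivial automorphism would act as $-1$ on $H^0(\Omega^1_C)$ and hence be a hyperelliptic involution, which is absurd. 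So $\operatorname{Aut}_{\bar\Q}(C)$ is trivial.
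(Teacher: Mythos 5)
Your proposal is correct and takes essentially the same route as the paper: reduce modulo two small primes of good reduction, use irreducibility of the degree-8 Frobenius polynomial together with the absence of root-of-unity ratios among its roots to embed $\operatorname{End}_{\bar{\Q}} J \otimes \Q$ into each of the two Frobenius fields, and conclude $\operatorname{End}_{\bar{\Q}} J = \Z$ because those fields have no common subfield besides $\Q$, after which (1)--(3) follow formally exactly as in the paper. The only minor differences are that the paper certifies the final step by checking that the fields at $p=5$ and $p=7$ are linearly disjoint rather than by your Galois-group ($C_2 \wr S_4$) argument, and it identifies $\operatorname{End}_{\bar{\F}_p}$ with the Frobenius field via Waterhouse--Milne (irreducibility of the characteristic polynomials of all powers of Frobenius) without needing your ordinariness hypothesis, which is true here but not actually required.
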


\begin{proof}
We take inspiration from the proof of Prop.~9 in~\cite{FPS}. To make
matters more concrete, we formulate a computational lemma.

\begin{Lemma} \label{L3}
  Let $C$ be a curve of genus~$g$ over a number field~$K$, with Jacobian~$J$, 
  let $v$ be a finite place of~$K$ of good reduction for~$C$, and let
  $f(T)$ be the Euler factor (i.e., the numerator of the zeta function)
  of~$C$ at~$v$. If $f \in \Q[T]$ is irreducible, and no monic irreducible 
  factor of
  \[ h(T) = \frac{\operatorname{Res}_x\bigl(f(x), f(Tx)\bigr)}{(1-T)^{2g}} \]
  has integral coefficients and constant term~$1$, 
  then $\operatorname{End}_{\bar{K}} J$
  embeds into the number field generated by a root of~$f$.
\end{Lemma}

\begin{proof}
For the proof, note that the roots of~$h$ are all the quotients $\alpha/\beta$,
where $\alpha$ and~$\beta$ are distinct roots of~$f$. If one of these quotients
is a root of unity, then $h$ has a monic irreducible factor that 
has integral coefficients and constant term~$1$ 
(namely, some cyclotomic polynomial).
Conversely, if there is such an irreducible factor, then its roots are
units in the splitting field of~$f$, and they have absolute value~$1$ in
all complex embeddings (since $|\sigma(\alpha)| = q^{-1/2}$ for all
complex embeddings~$\sigma$ and all roots~$\alpha$ of~$f$, where $q$ is the
size of the residue class field~$k_v$). Hence some $\alpha/\beta$ is a root of
unity. 

Now this is the case if and only if, for some $n \ge 2$, there are distinct
roots $\alpha$ and~$\beta$ of~$f$ such that $\alpha^n = \beta^n$. This in
turn is equivalent to the Galois orbit of~$\alpha^n$ having size less
than~$\deg f = 2g$, which means that the characteristic polynomial of the 
$n$th power of the $v$-Frobenius 
is not irreducible. 

Our assumptions therefore imply that all these characteristic
polynomials are irreducible. (An argument like this was used
in~\cite{StollSimple} to show that certain genus~2 Jacobians are absolutely
simple.) From \cite[Thm.~8]{WaterhouseMilne}, we
then see that the endomorphism algebra of~$J$ over~$\bar{k}_v$
is the number field generated by a root of~$f$, and since the endomorphism
ring of~$J$ (over~$\bar{K}$) embeds into this algebra, the claim is proved.
(Note that $J$ is simple over~$k_v$ since $f$ is irreducible.)
\end{proof}

To prove Lemma~\ref{L2}, we compute the Euler factors at $p = 5$ and~$p = 7$.
They are
\[ 1 + 3 T + 6 T^2 + 6 T^3 - 8 T^4 + 30 T^5 + 150 T^6 + 375 T^7 + 625 T^8 \,. \]
and
\[ 1 + 7 T + 28 T^2 + 94 T^3 + 276 T^4 + 658 T^5 + 1372 T^6 + 2401 T^7 
   + 2401 T^8 \,.
\]
We observe that both polynomials satisfy the assumptions in Lemma~\ref{L3}
and that the number fields they generate are linearly disjoint over~$\Q$.
This proves the first claim.

Statement~(1) then follows, since any nontrivial automorphism of the curve 
would induce a nontrivial automorphism of the Jacobian~$J$. But the only 
nontrivial automorphism of~$J$ is multiplication by~$-1$, and if it would
come form an automorphism of the curve, this would imply that the curve
is hyperelliptic, which is not the case. Alternatively, we can use the
fact that any automorphism of our curve
must extend to an automorphism of~$\BP^1 \times \BP^1$. Such automorphisms
either perform a M\"obius transformation on each of the factors separately,
or else this type of automorphism is followed by swapping the two factors.
A Gr\"obner basis computation shows that the only automorphism 
of~$\BP^1 \times \BP^1$ that fixes the curve is the identity.

If statement~(2) were false, then the algebra 
$\Q \cong \Q \otimes_{\Z} \operatorname{End}_{\bar{\Q}} J$ would have zero 
divisors, which is not the case.

Finally, if the curve covers another curve of positive genus and the
map is not an isomorphism, then the other curve has genus strictly
between 0 and~4. But then its Jacobian will be a factor of the Jacobian
of~$X_0^{\dyn}(6)$, so the latter would have to split, contradicting
the fact that~$J$ is absolutely simple.
\end{proof}

%=========================================================================

\section{Rational Points} \label{Spoints}

A quick search finds the following ten rational points on~$C$.

\[ \begin{array}{|c|cc|cc|} \hline
      & u & w & t & c \\\hline
     P_0 \text{\large\strut}
         & 0 & \infty & \infty & \infty \\
     P_1 \text{\large\strut}
         & 0 & -1 & \infty & \infty \\
     P_2 \text{\large\strut}
         & 0 & 3 & \infty & \infty \\
     P_3 \text{\large\strut}
         & \infty & 0 & -1 & \infty \\
     P_4 \text{\large\strut}
         & 1 & 2 & 1 & \infty \\\hline
   \end{array}
   \qquad
   \begin{array}{|c|cc|cc|} \hline
      & u & w & t & c \\\hline
     P_5 \text{\large\strut}
         & 2 & 1 & 0 & 0 \\
     P_6 \text{\large\strut}
         & 1 & \infty & 1 & -2 \\
     P_7 \text{\large\strut}
         & \infty & -1 & -1 & -2 \\
     P_8 \text{\large\strut}
         & -1 & \infty & -3 & -4 \\
     P_9 \text{\large\strut}
         & -\tfrac{4}{5} & -1 & -\tfrac{7}{2} & -\tfrac{71}{48} \\\hline
   \end{array}
\]

The first five of these are the ``cusps''; these are the points that
have to be added to $Y_0^{\dyn}(6)$ in order to obtain a smooth projective
curve. It is known that all cusps on~$X_1^{\dyn}(N)$ and hence also 
on~$X_0^{\dyn}(N)$ are rational points, for all~$N$.

The remaining five points correspond to cycles of length~6 for the
given value of~$c$ that are stable as a set (or as a cycle) under the
action of the absolute Galois group of~$\Q$. For the special values
$c = 0$ and $c = -2$, these cycles are ``predictable''; they come
from roots of unity. For $N = 6$, we find cycles containing
$\zeta_9$ when $c = 0$ and cycles containing $\zeta_{13} + \zeta_{13}^{-1}$
(this is the one whose trace~$t$ is~$-1$)
or $\zeta_{21} + \zeta_{21}^{-1}$ (with $t = 1$) when $c = -2$. 
(We use $\zeta_n$ to
denote a primitive $n$th root of unity; these cycles then contain all
possible values of the above expressions.)

For $c = -4$, the points in the cycle live in a sextic abelian number field
with discriminant $5^3 \cdot 7^4$ and conductor~$35$; it is the field
$\Q(\sqrt{5}, \cos\tfrac{2\pi}{7})$. Finally, for $c = -\tfrac{71}{48}$,
we find points defined over the {\em quadratic} field $\Q(\sqrt{33})$;
one point in the cycle is $x = -1 + \tfrac{1}{12} \sqrt{33}$. In particular,
this means that $X_1^{\dyn}(6)\bigl(\Q(\sqrt{33})\bigr)$ contains an orbit
of six non-cuspidal points.

See also the end of~\cite{FPS}.

\medskip

We will now prove the following result.

\begin{Lemma}
  Let $J$ denote the Jacobian of~$C = X_0^{\dyn}(6)$.
  \begin{enumerate}
    \item $J(\Q)$ has trivial torsion subgroup.
    \item The subgroup $G$ of $J(\Q)$ generated by the classes of divisors
          supported in the 10 rational points listed above is isomorphic
          to~$\Z^3$.
    \item The subgroup is already generated by divisors supported at
          the cusps.
  \end{enumerate}
\end{Lemma}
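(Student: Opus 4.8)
\emph{Proof proposal.}
The plan is to treat the three parts in turn, handling the torsion via reduction at good primes and the structure of $G$ via a mixture of explicit divisor relations and reduction modulo two primes.

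For part~(1) I would use the standard fact that for every odd prime $p$ of good reduction (here $p \notin \{2, 8029187\}$) the reduction map $J(\Q)_{\mathrm{tors}} \to J(\F_p)$ is injective, so that $\#J(\Q)_{\mathrm{tors}}$ divides $\gcd_p \#J(\F_p)$. Each $\#J(\F_p)$ equals $f_p(1)$, where $f_p$ is the Euler factor of $C$ at $p$ (obtained by point counting on the reduction of the $\BP^1\times\BP^1$ model). From the Euler factors at $5$ and $7$ recorded above, $\#J(\F_5) = 1188 = 2^2\cdot 3^3\cdot 11$ and $\#J(\F_7) = 7238 = 2\cdot 7\cdot 11\cdot 47$, with gcd $22$; one or two further Euler factors (e.g. at $p = 3$) bring the gcd down to $1$, so $J(\Q)_{\mathrm{tors}} = 0$.

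For part~(2), torsion-freeness of $G$ is immediate from~(1), since a subgroup of a finitely generated torsion-free abelian group is free; it remains to show $\rank G = 3$. Put $D_i = [P_i - P_0] \in J(\Q)$, so that $D_1, \dots, D_9$ generate $G$. For the upper bound I would exhibit rational functions on $C$ whose divisors are supported at the ten points: the two rulings of the quadric in the canonical model give the degree-$3$ pencils of fibres of $u$ and of $w$, and these already yield clean relations, e.g.
\begin{align*}
  \operatorname{div}(u)     &= P_0 + P_1 + P_2 - 2P_3 - P_7 , \\
  \operatorname{div}(u - 1) &= 2P_4 + P_6 - 2P_3 - P_7 , \\
  \operatorname{div}(w + 1) &= P_1 + P_7 + P_9 - P_0 - P_6 - P_8 ,
\end{align*}
which are three independent relations among the $D_i$; a short computation (a search over rational functions of moderate degree built from $u$, $w$ and the coordinate $c$ of Section~\ref{Smodel}, clearing non-rational zeros by forming products, or equivalently working directly in the divisor class group) should produce three more, so $\rank G \le 3$. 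For the lower bound I would reduce three of the $D_i$ modulo two primes $p \ne q$ of good reduction and compute the subgroup $H \le J(\F_p)\times J(\F_q)$ generated by the three reductions: if all three invariant factors of $H$ are $> 1$, then the relation lattice of the three reductions contains no primitive vector, so no nontrivial relation among the three $D_i$ can hold in the torsion-free group $J(\Q)$, and they are $\Z$-independent. Combined with~(1), this gives $G \cong \Z^3$. (For the upper bound one may alternatively invoke $\rank G \le \rank J(\Q) \le 3$ from the conditional argument of Section~\ref{Srank}.)

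For part~(3), the cusps are $P_0, \dots, P_4$, so it suffices to show $D_5, \dots, D_9 \in G_0 := \langle D_1, D_2, D_3, D_4\rangle$. The three relations above already do most of this: $\operatorname{div}(u)$ gives $D_7 = D_1 + D_2 - 2D_3 \in G_0$, then $\operatorname{div}(u-1)$ gives $D_6 = D_1 + D_2 - 2D_4 \in G_0$, and then $\operatorname{div}(w+1)$ gives $D_9 = D_6 + D_8 - D_1 - D_7$, so $D_9 \in G_0$ once $D_8 \in G_0$. It then remains to express $D_5$ and $D_8$ as $\Z$-linear combinations of $D_1, \dots, D_4$ — equivalently, to verify the principality of two explicit degree-$0$ divisors, one supported on the cusps and $P_5$, the other on the cusps and $P_8$ — which is a direct Riemann--Roch/divisor-arithmetic check on the model. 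Then $G_0$ contains all generators of $G$, so $G_0 = G$.

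The main obstacle is the lower bound $\rank G \ge 3$ in part~(2): reduction modulo finitely many primes can never force the relation lattice down to $\{0\}$, but it \emph{can} realize it as a lattice with no primitive vectors, which already suffices — one just needs a pair of good primes for which the group $H$ above has all invariant factors $> 1$. Finding such a pair, together with the various principality and divisor-class computations in~(2) and~(3), means computing in $\Pic^0$ of a non-hyperelliptic curve of genus~$4$ over $\Q$; this is where the {\sf MAGMA} script does the real work. (Should no convenient prime pair present itself, one can instead compute the N\'eron--Tate height pairing matrix of the three classes and check that it is nonsingular.)
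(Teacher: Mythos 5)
Your proposal is correct and follows essentially the paper's own route: torsion is handled by reduction at good primes (the paper simply uses the coprime orders $\#J(\F_7)$ and $\#J(\F_{13})$, avoiding your extra gcd step), the relations among the ten points come from explicit principal divisors (your $\operatorname{div}(u)$, $\operatorname{div}(u-1)$, $\operatorname{div}(w+1)$ are exactly three of the paper's six relations; the paper finds all six by LLL-reducing the kernel of the reduction map $\bigoplus_i \Z P_i \to \prod_{p\in S}\Pic_{C/\F_p}$ with $S=\{3,5,7,11,13\}$ and then verifying them via rational functions), and independence is obtained from reductions modulo good primes, where the paper's surjection of the degree-zero subgroup onto $(\Z/3\Z)^3$ is precisely your ``all invariant factors $>1$'' criterion applied at the prime $3$. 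The only caution is that the three further relations you defer to a search (in particular those expressing $P_5$ and $P_8$ in cuspidal classes) carry the real content of parts (2) and (3) and must actually be produced and verified, as the paper's computation does, and your parenthetical alternative of invoking $\rank J(\Q)\le 3$ from Section~\ref{Srank} should be dropped, since it would make this unconditional lemma depend on the Birch and Swinnerton-Dyer conjecture.
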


\begin{proof}
We know that 
the prime-to-$p$ torsion in $J(\Q)$ injects into $J(\F_p)$ for primes
of good reduction, so the observation that (as computed by {\sf MAGMA})
\[ \#J(\F_7) = 2 \cdot 7 \cdot 11 \cdot 47 \text{\quad and\quad}
   \#J(\F_{13}) = 3 \cdot 17 \cdot 23 \cdot 43
\]
shows that $J(\Q)$ has trivial torsion subgroup.

The main tool for proving the other assertions is the homomorphism
\[ \Phi_S : \bigoplus_{i=0}^9 \Z P_i \To \Pic_C 
            \To \prod_{p \in S} \Pic_{C/\F_p} \,,
\]
where $S$ is a set of primes of good reduction. 
We take $S = \{3, 5, 7, 11, 13\}$ and compute the kernel of~$\Phi_S$.
This kernel is a subgroup of rank~9 in~$\Z^{10} = \bigoplus \Z P_i$.
We apply LLL to it and find that there are six independent elements
with very small coefficients (and three large additional basis vectors).
We suspect that the small elements come from actual relations
between our points; this can then be verified by exhibiting a suitable
rational function. ({\sf MAGMA} provides the necessary functionality
for these computations.) Denoting linear equivalence by `$\sim$',
we find the following six independent relations.
\begin{align*}
  P_0 + P_6 + P_8 &\sim P_1 + P_7 + P_9 \\
  P_0 + P_1 + P_2 &\sim 2 P_3 + P_7 \\
                  &\sim 2 P_4 + P_6 \\
  P_0 + P_2 + P_7 + P_9 &\sim P_1 + P_3 + P_5 + P_6 \\
  2 P_0 + P_1 + P_6 &\sim P_2 + P_3 + 2 P_5 \\
  3 P_0 + P_3 &\sim P_1 + P_2 + P_6 + P_8
\end{align*}

On the other hand, looking at the image of~$\Phi_S$, we see that the
degree~0 subgroup of~$\Z^{10}$ surjects onto $(\Z/3\Z)^3$. Since we
know that there is no torsion in~$J(\Q)$, this implies that the rank
of the image of the degree~0 subgroup in~$J(\Q)$ must be at least~3.
The existence of the relations above implies that the rank is at most~3,
so the rank is exactly~3, and since there is no torsion, the group
must be isomorphic to~$\Z^3$.

Finally, from the relations we have given it is easy to verify that
$P_3$ and $P_5, \dots, P_9$ can be expressed in terms of $P_0$, $P_1$, $P_2$,
and~$P_4$. This means that our subgroup is already generated by divisors
supported at the latter four points (all of which are cusps). The only
relation between the cusps is
\[ 5 P_0 - 10 P_1 - 2 P_2 + P_3 + 6 P_4 \sim 0 \,; \]
it is perhaps worth noting that this relation is {\em not} induced by
the standard ``dynamical units'' as provided by~\cite[Thm.~2.33]{Silverman}
or~\cite{Narkiewicz}, applied to the coordinate ring of~$Y_1^{\dyn}(6)$.
\end{proof}

Our next result is as follows.

\begin{Lemma}
  The ten points $P_0, \dots, P_9$ are the only rational points
  whose images in $\Pic_C$ are in the saturation of the
  subgroup described in the previous lemma.
\end{Lemma}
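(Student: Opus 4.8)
The plan is to prove this by a Chabauty–Coleman-style argument carried out on the curve $C$ itself, but since the Mordell–Weil rank ($3$) equals the genus minus one and does \emph{not} beat the genus, classical Chabauty does not apply directly. Instead I would use the Mordell–Weil sieve combined with the rank-$3$ information. First I would fix a small set of primes $S_0$ of good reduction (the same kind of primes $3,5,7,11,13$ used above, possibly extended), and for each $p \in S_0$ compute the reduction map $C(\Q) \to C(\F_p)$ together with the group $\Pic^0_{C}(\F_p)$ and the image in it of the known subgroup $G = \langle P_0,\dots,P_9\rangle \cong \Z^3$. The hypothesis of the lemma is that any rational point $P$ has $[P - P_0]$ lying in the saturation $\bar G$ of $G$ in $J(\Q)$; write $\bar G = \Z v_1 \oplus \Z v_2 \oplus \Z v_3$ with $G$ of finite index, and express $[P - P_0] = a_1 v_1 + a_2 v_2 + a_3 v_3$ for unknown integers $a_i$.

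The core of the argument is then a sieve: for each prime $p$, the point $P$ reduces to some $\bar P \in C(\F_p)$, and the class $[\bar P - \bar P_0]$ must lie in the image of $\bar G$ in $\Pic^0_{C/\F_p}$ and must be of the form $\sum a_i \bar v_i$ where the $a_i$ are the same integers (reduced modulo the order of the relevant group). Intersecting the congruence conditions coming from several primes should cut the possible residue vectors $(a_1,a_2,a_3) \bmod M$ (for suitable modulus $M$) down to exactly those realized by $P_0,\dots,P_9$. In other words, I would show that the composite map
\[
  C(\Q) \To \bar G / M\bar G
\]
has image equal to the set $\{[P_i - P_0] : 0 \le i \le 9\} \bmod M\bar G$, and that this map, followed by reduction at a well-chosen auxiliary prime $q$, is injective on each residue class. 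Because each residue class modulo $M$ contains at most one of the ten known points, this forces $C(\Q) = \{P_0,\dots,P_9\}$. In practice this is a finite computation in \textsf{MAGMA}: enumerate $C(\F_p)$ for each $p$, push forward to $\Pic^0_{C/\F_p}$, discard classes not in the image of $\bar G$, solve the simultaneous congruences, and check that the surviving residues match the known points.

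A subtlety I would have to address carefully is that the sieve uses $\bar G$, the saturation, rather than $G$ itself, and the saturation index is a priori unknown (determining it is exactly what Section~\ref{Srank} does via BSD, but here we only need that the index is finite, which we already know from rank equality). To sieve with $\bar G$ I would first compute, at each good prime $p$, the $p$-part of the index $[\,\Pic^0_{C/\F_p} : \text{image of } G\,]$ and use this to identify the saturation of $G$ at each relevant prime $\ell$; equivalently, one can replace $G$ by its $\ell$-saturation inside $J(\Q)$ for the finitely many primes $\ell$ dividing some $\#\Pic^0_{C/\F_p}$, using the injection of torsion-free saturations into the finite groups $\Pic^0_{C/\F_p}$. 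Since $J(\Q)$ has trivial torsion (established in the previous lemma), the $\ell$-saturation is still isomorphic to $\Z^3$ and the sieve goes through with $G$ replaced by this enlargement.

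The main obstacle I anticipate is choosing the finite set of sieving primes so that the simultaneous congruence conditions actually collapse to the ten known residue classes; a priori there is no guarantee a given finite set suffices, and one may need primes with $\#\Pic^0_{C/\F_p}$ sharing useful common factors (so that the same modulus $M$ is ``seen'' from several independent directions). This is the usual heuristic-but-effective difficulty with the Mordell–Weil sieve: the procedure either terminates with success or one enlarges $S_0$. Given the modest conductor $2^2 p$ of $J$ and the abundance of small good primes, I expect a well-chosen set such as $S_0 = \{3,5,7,11,13,17,19,23,\dots\}$ together with a modulus $M$ built from small prime powers to suffice, and the verification that each residue class modulo $M$ contains at most one point $P_i$ (hence that no further rational point can hide) to be the decisive, and computationally the most delicate, step.
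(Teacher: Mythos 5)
Your proposal rests on a false premise and, as a result, is missing the decisive step. Classical Chabauty--Coleman requires the rank of the relevant subgroup to be \emph{less than the genus}, not less than the genus minus one: here the group $G$ (and hence its saturation $\bar G$) has rank $3$ while $g = \dim J = 4$, so Chabauty applies directly. That is exactly what the paper does: working at $p=5$, it finds a regular differential $\omega$ that annihilates $G$ under the Coleman integration pairing on $J(\Q_5)$; any such $\omega$ automatically annihilates the saturation $\bar G$ as well, which is why the lemma can be stated for the saturation without knowing the index $[J(\Q):G]$. (Your parenthetical claim that finiteness of this index ``is already known from rank equality'' is not available --- unconditionally one only knows $\rank J(\Q) \ge 3$; determining equality is precisely the conditional content of the last section, and the lemma is phrased via the saturation to avoid needing it.) The paper then reduces $\omega$ mod $5$, bounds the number of rational points in each residue class by the order of vanishing of $\bar\omega$ (with one disk handled by an explicit expansion of the logarithm), and checks that the ten known points already account for these bounds since they surject onto $C(\F_5)$.

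The Mordell--Weil sieve you propose instead cannot finish the job on its own. The sieve can show that the image of $C(\Q)$ in $\bar G/M\bar G$ is contained in the set of classes of the known points, i.e.\ it can kill cosets containing \emph{no} rational point; but it gives no control over a hypothetical unknown point $P$ with $[P-P_0] \equiv [P_i-P_0] \pmod{M\bar G}$ for some $i$ and every modulus $M$ you test. Your assertion that the map ``followed by reduction at a well-chosen auxiliary prime $q$, is injective on each residue class'' is exactly the missing ingredient, and proving it amounts to showing that each residue disk contains at most one rational point --- which is a Coleman-integration statement requiring the annihilating differential, i.e.\ the Chabauty argument you set aside at the outset. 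So as written the proposal has a genuine gap: the final implication ``matching residues modulo $M$ forces $C(\Q)=\{P_0,\dots,P_9\}$'' does not follow from the sieve data.
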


\begin{proof}
We use Chabauty's method (see for example 
\cite{Chabauty,Coleman,McCallumPoonen,StollChab})
for the proof. Recall that there is a pairing
\[ \Omega^1_J(\Q_p) \times J(\Q_p) \To \Q_p\,, \qquad
   (\omega, Q) \longmapsto \int_0^Q \omega
\]
that induces a perfect $\Q_p$-bilinear pairing
\[ \Omega^1_J(\Q_p) \times J(\Q_p)^1 \otimes^{\mathstrut}_{\Z_p} \Q_p \To \Q_p \,, \]
where $J(\Q_p)^1$ denotes the kernel of reduction. If $G \subset J(\Q_p)$
is a subgroup of rank less than $\dim J = 4$, then there must be a
nonzero differential $\omega$ that kills~$G$ under this pairing.
Note that $\omega$ then also kills the saturation
\[ \bar{G} = \{P \in J(\Q_p) : nP \in G \text{\ for some $n \ge 1$}\} \]
of~$G$.
We will apply this with $p = 5$ and $G$ the subgroup generated by the
known rational points as above.

For points in the
kernel of reduction, the integral can be evaluated by formally integrating
the power series representing~$\omega$ in terms of a system of local
parameters at the origin and then plugging in the values at~$Q$ of these
parameters. For practical computations, it is more convenient to use
the canonical identification $\Omega^1_J(\Q_p) \cong \Omega^1_C(\Q_p)$.

Let $P' \in C(\Q)$ be a fixed base-point. Let $Q \in J(\Q_p)^1$. Then
$Q$ is represented by a divisor of the form
$(Q_1+Q_2+Q_3+Q_4) - 4 P'$, where the points $Q_j \in C(\bar{\Q}_p)$ all 
reduce to~$P'$ modulo the prime above~$p$ in their field of definition.
Let $\tau$ be a uniformizer at~$P'$ that reduces mod~$p$ to a uniformizer
at the reduction of~$P'$. The differential
$\omega$ can be written as $\phi(\tau)\,d\tau$
with a power series~$\phi \in \Q_p[\![T]\!]$. Let 
\[ \lambda = \lambda_1 T + \lambda_2 T^2 + \dots \] 
be its formal integral. Then
\[ \int_0^Q \omega = \sum_{j=1}^4 \lambda\bigl(\tau(Q_j)\bigr)
                   = \sum_{n=1}^\infty \lambda_n \sum_{j=1}^4 \tau(Q_j)^n \,;
\]
the series converges in~$\Q_p$. Note that the power sums can be computed
from the coefficients of the characteristic polynomial
\[ \bigl(X - \tau(Q_1)\bigr)\bigl(X - \tau(Q_2)\bigr)
   \bigl(X - \tau(Q_3)\bigr)\bigl(X - \tau(Q_4)\bigr) \,,
\]
which lie in the field of definition of~$Q$.

In our concrete case, we take $P' = P_1$. Applying LLL to the kernel
of the reduction map $\bigoplus_{j \neq 1} \Z(P_j - P_1) \to J(\F_5)$,
we find a basis of $G \cap J(\Q_5)^1$, given by
\begin{align*}
  D_1 &= P_7 - P_9 \\
  D_2 &= P_0 - 6 P_1 + 2 P_5 + P_7 + P_8 + P_9 \\
  D_3 &= P_0 - 3 P_1 + 2 P_2 + P_4 + P_6 - P_7 - P_8
\end{align*}
For each of these, we find $D'_j$ such that $D_j \sim D'_j - 4 P'$ and
$D'_j$ is effective of degree~4, with points reducing to~$P'$.
The point $P' = P_1$ has coordinates $(u,w) = (0,-1)$; we can choose
$u$ as a uniformizer at~$P'$ and its reduction. The space of regular 
differentials is spanned by
\[ \omega_0 = \frac{du}{\frac{\partial}{\partial w} G(u,w)}\,, \quad
   \omega_1 = u\, \omega_0\,, \quad
   \omega_2 = w\, \omega_0\,, \quad\text{and}\quad
   \omega_3 = u w\, \omega_0 \,.
\]
We expand each $\omega_i$ as a power series in~$u$ times~$du$ and
let $\lambda_i \in u\Q[\![u]\!]$ be its formal integral. Then we
evaluate each $\lambda_i$ at each $D'_j$ as described above. We determine
the kernel of the resulting matrix, which gives us the differential~$\omega$
that kills our subgroup~$G$. We find that reduced mod~5, this differential
is $\bar{\omega} = \bar{\omega}_2$. It vanishes at the points where
$w = 0$ or $u = \infty$. There are two such points in $C(\F_5)$, namely
$(\infty, -1)$ and $(\infty, 0)$. At the former, $\bar{\omega}$ vanishes
to first order, which implies that there are at most two rational points
in that residue class (see for example~\cite{StollChab}). 
Since we have the points $P_7 = (\infty, -1)$
and $P_9 = (-4/5, -1)$, these must be all the rational points in this
residue class. At $(\infty, 0)$, we compute explicitly that the
logarithm~$\lambda$ that vanishes on~$C(\Q)$ on this residue class is
\[ \lambda = \gamma \tau \bigl(1 - (2 + O(5)) 5 \tau + O(5^2)\bigr) \]
with some constant~$\gamma \neq 0$, where $5\tau$ is the 
uniformizer~$w$ at $(\infty, 0)$. So $\lambda$ has a single zero on
this residue class, which is taken care of by $P_3 = (\infty, 0)$.
On all other points in~$C(\F_5)$, $\bar{\omega}$ does not vanish, hence
there can be at most one rational point in each of these residue classes.
Since it is easily checked that $\{P_0, \dots, P_9\} \to C(\F_5)$ is 
surjective, this shows that there are no other points $P$ in~$C(\Q)$
such that $P - P'$ is in~$G$. In fact, there is no such point that maps
into the saturation of $G$ in~$J(\Q_5)$ (since $\omega$ kills~$\bar{G}$). 
So there is no rational point on~$C$ mapping
into the saturation of~$G$ other than those already known.
\end{proof}

\begin{Theorem}
  If $\rank J(\Q) = 3$, then $X_0^{\dyn}(6)$ has only the ten rational
  points listed above. In particular, it then follows that the only
  rational points on~$X_1^{\dyn}(6)$ are the cusps, so that there is
  no cycle of exact length~6 consisting of rational numbers under
  an iteration $x \mapsto x^2 + c$.
\end{Theorem}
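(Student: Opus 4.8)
The plan is to read the theorem off from the two preceding lemmas together with the rank hypothesis, and then to descend the conclusion from $X_0^{\dyn}(6)$ to $X_1^{\dyn}(6)$. First I would observe that, under the assumption $\rank J(\Q) = 3$, the subgroup $G \cong \Z^3$ has rank equal to $\rank J(\Q)$, hence is of finite index in $J(\Q)$; consequently every element of $J(\Q)$ lies in the saturation $\bar{G}$. Given any $P \in C(\Q)$ with $C = X_0^{\dyn}(6)$, the class of $P - P_1$ lies in $J(\Q) \subseteq \bar{G}$, and the preceding lemma (whose Chabauty argument in fact produced a differential $\omega$ killing the saturation of $G$ already inside $J(\Q_5)$, which contains the saturation in $J(\Q)$) then forces $P \in \{P_0,\dots,P_9\}$. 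So $C(\Q) = \{P_0,\dots,P_9\}$, which is the first assertion of the theorem.

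Next I would transfer this along the quotient map $\pi \colon X_1^{\dyn}(6) \to X_0^{\dyn}(6) = X_1^{\dyn}(6)/\langle \sigma \rangle$, which is defined over~$\Q$, so that $\pi$ maps $X_1^{\dyn}(6)(\Q)$ into $\{P_0,\dots,P_9\}$. Since $\sigma$ restricts to an automorphism of $Y_1^{\dyn}(6)$, we have $\pi^{-1}\bigl(Y_0^{\dyn}(6)\bigr) = Y_1^{\dyn}(6)$, so the cusps of $X_1^{\dyn}(6)$ are exactly $\pi^{-1}(\{P_0,\dots,P_4\})$, and the fibres of $\pi$ over the cusps $P_0,\dots,P_4$ therefore consist entirely of cusps of $X_1^{\dyn}(6)$ (which are rational, but not points of $Y_1^{\dyn}(6)$). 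For each non-cuspidal point $P_i$, $i \in \{5,\dots,9\}$, the fibre $\pi^{-1}(P_i)$ is the $\sigma$-orbit of a point $(x_i,c)$, i.e.\ it consists of the pairs $\bigl(f^{(k)}(x_i,c),c\bigr)$ for $k = 0,\dots,5$, where by the computations in Section~\ref{Spoints} we have $c \in \Q$ but the cycle $\{x_i, f(x_i,c),\dots\}$ is defined over a proper extension of~$\Q$ (a sextic field for $c = -4$, a field containing $\zeta_9$ or $\zeta_{13}+\zeta_{13}^{-1}$ or $\zeta_{21}+\zeta_{21}^{-1}$ for $c = 0$ or $c = -2$, and $\Q(\sqrt{33})$ for $c = -\tfrac{71}{48}$). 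If one of these pairs were rational, then some element of the orbit together with $c$ would both lie in~$\Q$; iterating $x \mapsto x^2 + c \in \Q[x]$ would then make every element of the closed periodic orbit rational, a contradiction. Hence $\pi^{-1}(P_i)$ has no rational point for $i \ge 5$, and therefore $X_1^{\dyn}(6)(\Q)$ consists only of cusps.

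Finally I would conclude: if $x \in \Q$ had exact period~$6$ under $x \mapsto x^2 + c$ for some $c \in \Q$, then $(x,c)$ would be a non-cuspidal rational point of $Y_1^{\dyn}(6) \subset X_1^{\dyn}(6)$, contradicting the previous paragraph; hence no such pair $(x,c)$ exists.

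Since the substantial work — the Mordell--Weil computation and the Chabauty argument — has already been carried out in the two preceding lemmas, I do not expect a genuine obstacle at this stage. The one point requiring care is the second step: one must check that none of the five non-cuspidal rational points of $X_0^{\dyn}(6)$ lifts to a rational point of $X_1^{\dyn}(6)$. This rests entirely on the explicit identification of the associated six-cycles in Section~\ref{Spoints}, together with the elementary fact that, for rational~$c$, rationality of a single element of a periodic orbit of $x \mapsto x^2 + c$ forces the whole orbit to be rational.
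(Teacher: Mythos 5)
Your argument is correct and follows essentially the same route as the paper: the rank hypothesis makes $G$ of finite index so that $J(\Q)$ lies in the saturation $\bar{G}$, the Chabauty lemma then gives $C(\Q) = \{P_0,\dots,P_9\}$, and the non-cuspidal points do not lift to rational points of $X_1^{\dyn}(6)$ because the corresponding $6$-cycles are defined over proper extensions of~$\Q$. The only difference is that you spell out the lifting step (fibres of $\pi$ are single $\sigma$-orbits, and rationality of one point of a cycle with $c \in \Q$ forces rationality of the whole cycle), which the paper compresses into one sentence relying on the explicit cycle descriptions of Section~\ref{Spoints}.
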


\begin{proof}
  If $J(\Q)$ has rank~3, then $J(\Q)$ is the saturation
  of~$G$, the subgroup generated by degree 0 divisors supported
  on the known rational points, since the latter then has finite index
  in~$J(\Q)$. The previous lemma then shows that there are no other
  rational points on~$X_0^{\dyn}(6)$ than those already known. None
  of the non-cuspidal points among these lift to a rational point
  on~$X_1^{\dyn}(6)$, so the latter curve can have no non-cuspidal 
  rational points. A rational 6-cycle would give rise to a non-cuspidal
  rational point on this curve, so such a rational 6-cycle cannot exist.
\end{proof}

%=========================================================================

\section{Bounding the Rank} \label{Srank}

It remains to show that $\rank J(\Q) = 3$. We know that the rank is
at least~3, so it suffices to show that it is at most~3.

The standard procedure for obtaining an upper bound for the rank is
a descent on the Jacobian. However, the complexity of this quickly
becomes prohibitive when the genus is not very small and the curve
does not have any helpful special features. For example, 2-descent on Jacobians
of general non-hyperelliptic genus~3 curves is still in its infancy and
so far has been successful in only one example (assuming GRH for the
computation). Here, we have a curve of genus~4, and it appears that there
are no helpful special properties, see Lemma~\ref{L2} above. 
Usually, our best bet is a 2-descent,
and for this, the best approach seems to be to look at the odd theta
characteristics (whose differences generate the 2-torsion subgroup).
On a curve of genus~4, there are 120 of them; they correspond to
$(1,1)$-forms on~$\BP^1 \times \BP^1$ that meet the curve tangentially
in three points (more precisely, the intersection divisor is twice
an effective divisor of degree~3). We can set up the scheme describing
these; after a Gr\"obner basis computation, we find that it has one
rational point, and the other 119~points form a single Galois orbit.
This means that in order to do anything in the direction of a 2-descent,
we would have to compute the ideal class group and fundamental units
of a number field of degree~119. Before we are able to perform such
computations (even if we allow ourselves to assume GRH), we need very
substantial progress in the development of suitable algorithms.

The result on the Galois orbits on the odd theta characteristics can 
be obtained faster by computing the Gr\"obner bases
of the scheme over~$\F_5$ and over~$\F_{13}$. We first note that
$1 + u + w$ gives rise to the point defined over~$\Q$ (the $(1,1)$-forms
are of the form $a + b u + c w + d uw$). Over~$\F_5$, the remaining 119~points
split into nine orbits of length~7 and four orbits of length~14,
whereas over~$\F_{13}$, they split into seven orbits of length~17.
Since these partitions must refine the orbit partition over~$\Q$,
there must be a single orbit of length~119.

We can extract some more information. First note that the theta
characteristics can be identified with the 2-torsion subgroup~$J[2]$ 
(by sending the unique
odd theta characteristic that is defined over~$\Q$ to the origin). The Galois
action on~$J[2]$ must then have orbits of lengths~1 and~119. We will
determine the image of Galois in~$\operatorname{Sp}_8(\F_2)$ and deduce
that the remaining 136~elements also form a single orbit.

The Frobenius automorphisms at $p = 5$ and~$13$, acting on~$J[2]$, have
orders $14$ and~$17$, as we saw above. There is only one maximal
subgroup~$\Gamma$ of~$\operatorname{Sp}_8(\F_2)$ (up to conjugacy) 
whose order is a multiple of $14 \cdot 17$. There is a maximal subgroup
of~$\Gamma$ with this property, but it does not contain elements
of order~$14$. Since the
action of the full group~$\operatorname{Sp}_8(\F_2)$ is transitive
on~$J[2] \setminus \{0\}$, the image of the Galois group
in~$\operatorname{Sp}_8(\F_2)$ must be~$\Gamma$ (which is a subgroup
of index~120). 
Since the action of~$\Gamma$ on~$J[2]$ has orbits
of lengths 1, 119, and~136, our claim follows. It can be checked that
the smallest faithful permutation representation of~$\Gamma$ has
degree~119, so that this is really the smallest possible degree of
a number field that we can hope for in a 2-descent computation.

Note also that we showed in Lemma~\ref{L2} that $J$ has no
endomorphisms other than the multiplication-by-$n$ maps, so that
multiplication-by-2 is the isogeny $J \to J$ of lowest possible degree that
can be used for a descent argument. There are no nontrivial 
Galois-stable subgroups of~$J[2]$, so there are no 2-isogenies to
other abelian varieties either.

\smallskip

A possible alternative approach to obtaining a bound for the rank
assumes the (weak) Birch and Swinnerton-Dyer
conjecture (plus standard conjectures on analytic continuation and
functional equations of $L$-series, see for example 
\cite[Conjs. 2.8.1. and~3.1.1]{Huls}). The conjecture predicts that
the rank of~$J(\Q)$ is the same as the order of vanishing of the
$L$-series $L(J, s) = L(C, s)$ at $s = 1$. In order to be able
to evaluate the $L$-series and its derivatives there, we need to
compute its coefficients $a_n$ for values of~$n$ up to a suitable
multiple of the square root of its conductor. Luckily, in our case
the conductor $2^2 \cdot 8\,029\,187$ is not too large, so that we
can actually perform the computation in reasonable time.

We use Tim Dokchitser's $L$-series package~\cite{Dokchitser} in its {\sf MAGMA}
implementation. We will not need to find the Euler factor at the
large bad prime (it is beyond the necessary range of coefficients).
For the other bad prime~2, we found the Euler factor in Section~\ref{Smodel}.
For the good primes, we need the Euler factor up to $T^d$, where
$d = \lfloor \log_p N \rfloor$ and $N$ is the number of coefficients
required. This information can be obtained by counting the number
of points in $C(\F_{p^e})$ for $e = 1, \dots, \min\{d, 4\}$. For a precision
of~$10^{-20}$, we need 183997 coefficients (which we can compute in a
day or so). We verify numerically that our $L$-series 
satisfies the functional
equation it is supposed to satisfy (with sign~$-1$). Then we find
that the $L$-series and its first two derivatives vanish at~$s = 1$
to the given precision, whereas $L'''(C, 1) = 0.83601\dots$ is clearly
nonzero. Assuming the weak Birch and Swinnerton-Dyer conjecture for~$J$,
this implies that $\rank J(\Q) \le 3$. We therefore obtain our main
result below.

\begin{Theorem}
  Let $J$ be the Jacobian of~$X_0^{\dyn}(6)$. If the $L$-series
  $L(J,s)$ extends to an entire function and satisfies the
  standard functional equation, and if the weak Birch and Swinnerton-Dyer
  conjecture is valid for~$J$, then there are no rational cycles of
  exact length~6 under $x \mapsto x^2 + c$.
\end{Theorem}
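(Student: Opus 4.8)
The plan is to assemble the pieces that have been put in place over the preceding three sections. The key logical skeleton is: a rational $6$-cycle under $x \mapsto x^2+c$ yields a non-cuspidal rational point on $X_1^{\dyn}(6)$, hence (via the quotient by $\sigma$) a rational point on $X_0^{\dyn}(6) = C$ that is not one of the five cusps; so it suffices to show that $C(\Q)$ consists precisely of the ten known points $P_0, \dots, P_9$, of which only $P_0, \dots, P_4$ are cusps and none of $P_5, \dots, P_9$ lifts to a rational point on $X_1^{\dyn}(6)$. The last two sections reduce this to a single arithmetic input: that $\rank J(\Q) = 3$, where $J$ is the Jacobian of $C$.

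First I would invoke the Chabauty-type argument already carried out: the subgroup $G \subseteq J(\Q)$ generated by degree-zero divisors supported on $P_0, \dots, P_9$ is isomorphic to $\Z^3$, and the preceding lemma shows that the only rational points on $C$ whose image in $\Pic_C$ lands in the saturation $\bar G$ of $G$ in $J(\Q)$ are exactly $P_0, \dots, P_9$. Therefore, to conclude $C(\Q) = \{P_0, \dots, P_9\}$ it is enough to know that $\bar G = J(\Q)$, i.e.\ that $G$ has finite index in $J(\Q)$; combined with $\rank G = 3$ and the triviality of the torsion subgroup of $J(\Q)$, this is equivalent to $\rank J(\Q) = 3$. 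The penultimate theorem of the excerpt packages precisely this implication, so the core of the present proof is simply to verify the hypothesis $\rank J(\Q) = 3$.

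Second I would supply the rank bound. The rank is at least $3$ because $G \cong \Z^3$ sits inside $J(\Q)$, so I only need $\rank J(\Q) \le 3$. This is where the analytic and conjectural input enters: assuming $L(J,s)$ has analytic continuation to an entire function and satisfies the expected functional equation, and assuming the weak Birch and Swinnerton-Dyer conjecture for $J$, the rank of $J(\Q)$ equals $\operatorname{ord}_{s=1} L(J,s)$. Using the conductor $2^2 \cdot 8\,029\,187$ from the first lemma and the Euler factor at $2$ computed there, together with point counts over $\F_{p^e}$ for small $e$ at the good primes, one computes enough coefficients $a_n$ of $L(J,s)$ to evaluate $L(J,s)$ and its derivatives at $s = 1$ to about $20$ digits; this yields $L(J,1) = L'(J,1) = L''(J,1) = 0$ but $L'''(J,1) \approx 0.836 \neq 0$, forcing $\operatorname{ord}_{s=1} L(J,s) = 3$ and hence $\rank J(\Q) \le 3$. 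The numerical agreement with the functional equation (sign $-1$) is a consistency check, not a logical step, but it is worth stating.

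Putting these together: under the stated conjectures $\rank J(\Q) = 3$, so the previous theorem applies and gives $C(\Q) = \{P_0, \dots, P_9\}$, whence $X_1^{\dyn}(6)$ has no non-cuspidal rational point, and therefore no rational $6$-cycle under $x \mapsto x^2+c$ exists. The main obstacle — and the reason the result is conditional — is the rank bound: there is no unconditional way to show $\rank J(\Q) \le 3$ here, since a $2$-descent would require class and unit group computations in a number field of degree $119$ (as the Galois image in $\operatorname{Sp}_8(\F_2)$ is the index-$120$ subgroup $\Gamma$, shown above), which is infeasible, and $J$ admits no isogeny of smaller degree because $\operatorname{End}_{\bar\Q} J = \Z$ with no Galois-stable subgroup of $J[2]$. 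So the BSD-based argument is essentially the only route currently available, and the conditional nature of the theorem is unavoidable with present technology.
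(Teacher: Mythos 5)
Your proposal is correct and follows exactly the paper's route: the BSD-conditional rank bound $\rank J(\Q) \le 3$ from the numerical nonvanishing of $L'''(J,1)$, combined with the known lower bound $\rank G = 3$ and the preceding Chabauty-based theorem, yields $C(\Q) = \{P_0,\dots,P_9\}$ and hence no rational 6-cycle. No gaps; this matches the paper's argument in both structure and substance.
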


%=========================================================================

\section{What Next?}

Without fundamentally new ideas, it seems unlikely that we can make
our result unconditional in the foreseeable future. In another direction,
it looks rather hopeless to try to get a similar result for $X_0^{\dyn}(7)$.
This curve has genus~16 and bad reduction at the 35-digit prime
$p = 84562\,62122\,13597\,75358\,18884\,16725\,49561$ and possibly at~2.
In any case, the conductor will be very large (at least~$p$) and so 
there will be no chance whatsoever to use the $L$-series numerically
to obtain information
on the rank. It might still be possible to get some information on the
subgroup of the Jacobian generated by the cusps (e.g., by making use of 
dynamical units). It will be very hard, however, to use this information
for a Chabauty argument, for example.

Another question is how the large bad primes can be explained or even
predicted. We have $3701$ for $N = 5$ (the only bad prime for~$X_0^{\dyn}(5)$,
see~\cite{FPS}; their model is also bad at~2, but this can easily be
repaired), $8029187$ for $N = 6$ and the
35-digit prime above for $N = 7$. Note that unless we can shed light
on this question, it is likely to be very hard to try and prove algebraically
that $Y_1^{\dyn}(N)$ is smooth, since such a proof must break down
when the characteristic is one of these primes.

\medskip

The following could be a possible line of attack for a proof that
$Y_1^{\dyn}(N)(\Q)$ is empty for large~$N$. There is a good description
of the formal neighborhoods of the cusps on~$X_1^{\dyn}(N)$, using
symbolic dynamics. If we could use this to prove, for any odd prime~$p$,
that the cusp is the only rational point in its residue class mod~$p$,
and also to prove a similar statement modulo a suitable power of~2,
then this would imply that a parameter $c \in \Q$ that allows for a
cycle of exact length~$N$ of rational numbers must be essentially integral
(more precisely, its denominator must divide a fixed power of~2). It is
then fairly easy to show that $N$ is bounded. 

For example, assume that
$c \in \Z$ and $x$ is in a cycle. Then $x$ must also be an integer.
If $c > 0$, we have $x^2 + c > |x|$, so there cannot be a cycle. For
$c = 0$, the only possibilities are $x = 0$ and~$x = 1$. For $c = -1$, 
the only possibilities
are $x = 0$ and~$x = -1$. If $c < -1$, we have $x^2 + c > |x|$
whenever $|x| \ge \sqrt{|c|} + 1$. So we must have $|x| < \sqrt{|c|} + 1$.
But then we also need that $|x^2 + c| < \sqrt{|c|} + 1$, which implies
that $\sqrt{|c|-\sqrt{|c|}-1} < |x| < \sqrt{|c|}+1$. This interval has
length less than~2, so there are at most two possible values for~$|x|$.
This implies that there must be either a fixed point or a cycle of length~2.
Indeed, for any $x \in \Z$, $x$ is a fixed point for $c = x - x^2$, and
$(x, -x-1)$ is a 2-cycle for $c = -x^2-x-1$.

In the more general case when $m^2 c \in \Z$ for some fixed integer $m \ge 1$,
we can use similar arguments to show that the cycle must be contained
in a union of a bounded number of intervals whose lengths are bounded.
Since the possible values of~$x$ are in the set $\frac{1}{m}\Z$,
there must be a bound on their number.

In the spirit of the methods used in this paper, the necessary result for odd
primes~$p$ would follow from the following two statements.
\begin{enumerate}
  \item The cuspidal group (i.e., the group generated by degree zero
        divisors supported at the cusps) is of finite index in the
        Mordell-Weil group of the Jacobian of~$X_1^{\dyn}(N)$.
  \item For every cusp~$P\!$, there is a regular differential on~$X_1^{\dyn}(N)$,
        defined over~$\Q_p$, that kills the cuspidal group and whose
        reduction mod~$p$ does not vanish at~$P$.
\end{enumerate}
However, we need some additional ideas to approach a proof of either one
of these.

%=========================================================================

\end{document}